\providecommand{\U}[1]{\protect\rule{.1in}{.1in}}
\newtheorem{theorem}{Theorem}
\newtheorem{corollary}[theorem]{Corollary}
\newtheorem{definition}[theorem]{Definition}
\newtheorem{proposition}[theorem]{Proposition}
\newtheorem{remark}[theorem]{Remark}
\newenvironment{proof}[1][Proof]{\noindent\textbf{#1.} }{\ \rule{0.5em}{0.5em}}
\begin{document}

\title{Molecular chains interacting by Lennard-Jones and Coulomb forces}
\author{Carlos Garc\'{\i}a-Azpeitia\thanks{E-mail: cgazpe@ciencias.unam.mx}, Manuel
Tejada-Wriedt\thanks{E-mail: mtw@ciencias.unam.mx}\\{\small Departamento de Matem\'{a}ticas, Facultad de Ciencias, }\\{\small Universidad Nacional Aut\'{o}noma de M\'{e}xico, Ciudad de M\'{e}xico,
04510, M\'{e}xico}}
\maketitle

\begin{abstract}
We study equations for the mechanical movement of chains of identical
particles in the plane interacting with their nearest-neighbors by bond
stretching and by van der Waals and Coulomb forces. We find collinear and
circular equilibria as minimizers of the energy potential for chains with
Neumann and periodic boundary conditions. We prove global bifurcation of
periodic brake orbits from these equilibria applying the global Rabinowitz
alternative. These results are complemented with numeric computations for
ranges of parameters that include carbon atoms among other molecules.

Keywords: Lennard--Jones body problem, ring configuration, periodic solutions,
global bifurcation, molecular dynamics.

MSC 34C25, 37G40, 47H11, 70H33

\end{abstract}

\section*{Introduction}

Molecular mechanics have been very successful in describing both small
molecules and large biological systems. They are built on the framework of
classical mechanics, and rely on the accurate description of atomic
interactions. The potential energy of all systems in molecular mechanics is
represented by what in chemistry is known as a force field, which refers to
the functional form of the potential energy and the set of parameters (e.g.
bond strength, electric charge, van der Waals radius, etc.) that describe how
the particles of a given system interact. Numerous force fields have been
developed, from \textquotedblleft all-atom" to \textquotedblleft
coarse-grained". The first ones take into account every atom in a system, and
the second ones treat groups of atoms as single particles (see \cite{LoGu} and
\cite{MoTi} for review and state of the art).

Most classical force fields used in molecular mechanics have potential energy
terms associated with bond deformations, electrostatic interactions, and van
der Waals forces. We consider a chain of identical particles in the plane
$u_{j}\in\mathbb{R}^{2}$ for $j=1,...,n$, where each particle $u_{j}$
interacts with its nearest neighbors $u_{j-1}$ and $u_{j+1}$ by bond
stretching (no bending forces are considered), and with the rest of the chain
by van der Waals and electrostatic forces, modeled with Lennard-Jones and
Coulomb potentials.

The adimensionalized equations for the system of $n$ particles are%
\begin{equation}
\ddot{u}_{j}=-V_{u_{j}},\text{\qquad}j=1,\ldots,n\text{,} \label{Eq}%
\end{equation}
where the energy function is%
\begin{equation}
V=\sum_{j=1}^{n-1}U(\left\vert u_{j+1}-u_{j}\right\vert ^{2})+\sigma
U(\left\vert u_{n}-u_{1}\right\vert ^{2})+\sum_{1\leq j<k\leq n}W(\left\vert
u_{j}-u_{k}\right\vert ^{2}). \label{V}%
\end{equation}
Bond stretching is represented by the potential%
\begin{equation}
U(x)=x-2x^{1/2}\text{,} \label{U}%
\end{equation}
while non-bonded interactions by a potential $W\in C^{2}$ such that
\begin{equation}
\lim_{x\rightarrow0}W(x)=\lim_{x\rightarrow0}-W^{\prime}(x)=\infty,\qquad
\lim_{x\rightarrow\infty}W(x)=\lim_{x\rightarrow\infty}W^{\prime}(x)=0\text{.}
\label{W}%
\end{equation}
The assumptions for $W$ assure that the interaction is repulsive when two
particles are close and vanish when they are far from each other, which is the
case of van der Waals and electrostatic interactions.

Two kind of molecular chains are studied: the circular chain (periodic
boundary condition), where $\sigma=1$ in (\ref{V}), and the collinear chain
(Neumann boundary conditions), where $\sigma=0$ in (\ref{V}). The equations
for the circular chain are equivariant under the symmetry group
\[
D_{n}\times O(2)\times O(2)\text{,}%
\]
which acts by permuting particles, rotating positions and translating time;
see (\ref{Ac}) and (\ref{AcT}) for details. For the collinear chain, the
equations are equivariant only under the group $\mathbb{Z}_{2}\times
O(2)\times O(2)$.

In Theorem \ref{The1}, we use the Palais principle of symmetric criticality to
obtain equilibria as minimizers of $V$ in subspaces of symmetric
configurations.\ This allows us to prove the existence of symmetric collinear
and circular equilibria, among others.

Theorems \ref{The2} and \ref{The3} establish that both the collinear and
circular equilibria have a global bifurcation of $2\pi/\nu$-periodic solutions
emanating from the frequency $\nu=\nu_{0}$ for each positive non-resonant
eigenvalue $\nu_{0}^{2}$ of the Hessian of $V$. The global property is proved
using the global Rabinowitz alternative in subspaces of symmetric periodic
functions. This property assures us that the branch is a continuum. Moreover,
the branch has norm or period going to infinity, ends in a collision, or comes
back to other bifurcation point.

The solutions given in Theorems \ref{The2} and \ref{The3} are brake orbits
(Figure 1). These kind of orbits are solutions for which all the velocities
are zero at some instant, see \cite{Bar93}, \cite{MoMo} and references
therein. Bifurcation of other types of periodic solutions for molecules have
been considered previously in \cite{MoRo99}.

\begin{figure}[h]
\begin{center}
\resizebox{13cm}{!} {\includegraphics{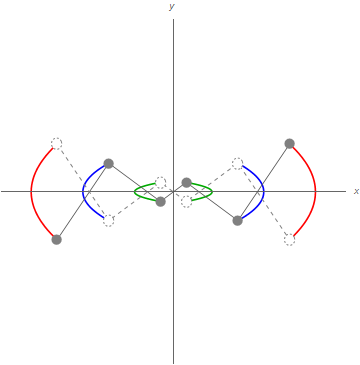}  \hskip1.0cm
\includegraphics{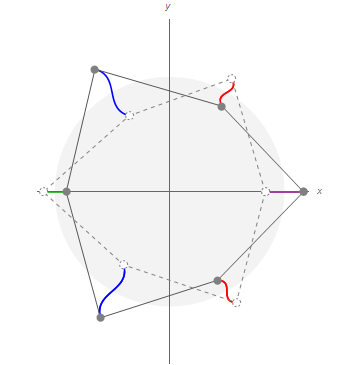} }
\end{center}
\caption{Illustration of the symmetries of brake orbits for $n=6$. Orbits with
the same color are symmetric by reflections and rotations.}%
\end{figure}

We complement our results with numeric computations for parameters that
include carbon atoms, among other particles considered in CHARMM36 force field
\cite{BeZh12}. In Figures 2 and 3, we present the amplitude of the collinear
and circular equilibria for $n=6$, respectively, and the number of negative
eigenvalues of the Hessian. The numeric computations allow us to conclude that
both equilibria for $6$ general particles loose stability when the
Lennard-Jones parameter $A$ is increased. In Table 1, we present the number of
negative eigenvalues of the collinear and circular equilibria for different
number of carbon atoms.

In Section 1, we prove the existence of minimizers that correspond to
equilibria. In Section 2, we find the linearization around them. In Section 3,
we prove global bifurcation of periodic solutions. In Section 4, we
numerically estimate the amplitude and spectra of the equilibria for $n=6$,
and we discuss their stability.

\section{Equilibria of molecular chains}

Equilibrium configurations of molecular chains correspond to critical points
of $V$; these are points $\mathbf{a}=(a_{1},...,a_{n})\in\mathbb{R}^{2n}$ such
that $\nabla_{u}V(\mathbf{a})=0$.

The potential $V$ is well defined in the set $\Omega=\{u\in\mathbb{R}%
^{2n}:u_{j}\neq u_{k}\}$. Given that any translation of an equilibrium is an
equilibrium, we restrict the potential $V$ to the subspace
\[
\Omega_{0}=\{u\in\Omega:\sum_{j=1}^{n}u_{j}=0\}\text{.}%
\]

We define the action of the group $S_{n}$ of permutations of $\{1,...,n\}$ and
the group $O(2)=S^{1}\cup\tilde{\kappa}S^{1}$in $\mathbb{R}^{2n}$ as%
\begin{equation}
\rho(\gamma)x_{j}=x_{\gamma(j)}\text{,\qquad}\rho(\theta)x_{j}=e^{-J\theta
}x_{j}\text{,\qquad}\rho(\tilde{\kappa})x_{j}=Rx_{j}\text{,} \label{Ac}%
\end{equation}
where the matrices $J$ and $R$ are%
\[
J=\left(
\begin{array}
[c]{cc}%
0 & -1\\
1 & 0
\end{array}
\right)  \text{ and }R=\left(
\begin{array}
[c]{cc}%
1 & 0\\
0 & -1
\end{array}
\right)  \text{.}%
\]

Let us assume for the moment that $V$ is $G$-invariant, where $G$ is a
subgroup of $S_{n}\times O(2)$.

\begin{theorem}
\label{The1}If $H$ is a subgroup of $G$, then the minimizer of $V$ is achieved
in each connected component of%
\[
\Omega_{0}^{H}=\Omega_{0}\cap\mathrm{Fix}(H)=\{u\in\Omega_{0}:\rho(g)u=u\text{
for }g\in H\}\text{.}%
\]
These minimizers are critical points of $V$ in $\Omega$.
\end{theorem}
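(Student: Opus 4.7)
The plan is to establish the existence of a minimizer on each connected component of $\Omega_0^H$ by a coercivity/compactness argument, and then to invoke the Palais principle of symmetric criticality to promote the minimizer to a critical point of $V$ on the full open set $\Omega$.

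Fix a connected component $C$ of $\Omega_0^H$ and let $\{u^{(k)}\} \subset C$ be a minimizing sequence. The first step is to show that $V$ is coercive on $C$: namely, $V(u) \to \infty$ whenever $u$ approaches the collision set $\mathbb{R}^{2n} \setminus \Omega$ or leaves every bounded subset of $\mathbb{R}^{2n}$. The behavior near collisions follows from (\ref{W}), since $W(|u_j - u_k|^2) \to \infty$ as $u_j \to u_k$, while all remaining terms of $V$ are bounded below because $U(x) = x - 2x^{1/2} \geq -1$ and $W$ is continuous on $(0,\infty)$ with $W \to \infty$ at $0$ and $W \to 0$ at $\infty$, hence bounded below. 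For escape to infinity, I use the constraint $\sum_j u_j = 0$: writing $u_j = \frac{1}{n}\sum_k (u_j - u_k)$ and bounding $|u_j - u_k|$ by $(n-1)\max_i |u_{i+1} - u_i|$, if $\max_j |u_j| \to \infty$ then some consecutive distance $|u_{i+1} - u_i|$ must tend to infinity, forcing $U(|u_{i+1} - u_i|^2) \to \infty$. Consequently the sublevel sets of $V$ in $C$ are bounded in $\mathbb{R}^{2n}$ and bounded away from collisions, so a subsequence of $\{u^{(k)}\}$ converges to some $\mathbf{a}$ with $V(\mathbf{a}) < \infty$. Since $\Omega_0^H$ is open in the closed linear subspace $\mathrm{Fix}(H) \cap \{\sum u_j = 0\}$ and $\mathbf{a}$ lies in this open set, $\mathbf{a}$ belongs to the same connected component $C$ as the sequence.

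It remains to apply symmetric criticality. The group $G$ acts orthogonally on $\mathbb{R}^{2n}$, so the $G$-invariance of $V$ makes $\nabla V$ equivariant; consequently $\nabla V(\mathbf{a}) \in \mathrm{Fix}(H)$. Translation invariance of $V$ further gives $\sum_j \partial_{u_j} V(\mathbf{a}) = 0$, so $\nabla V(\mathbf{a})$ lies in $\mathrm{Fix}(H) \cap \{\sum v_j = 0\}$. On the other hand, $\mathbf{a}$ is a minimizer of $V$ on $C$, which is an open subset of precisely this subspace, so $\nabla V(\mathbf{a})$ is orthogonal to it. The two conditions force $\nabla V(\mathbf{a}) = 0$, so $\mathbf{a}$ is a critical point of $V$ on $\Omega$.

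The main obstacle is ruling out that a minimizing sequence in $C$ either collides or escapes to infinity before achieving the infimum; both possibilities are eliminated by the combination of the $W$-singularity at collisions and the growth of $U$ on consecutive distances, which together make sublevel sets relatively compact inside $C$. Once compactness is secured, the symmetric-criticality step is standard.
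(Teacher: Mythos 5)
Your proof is correct and follows essentially the same route as the paper: coercivity of $V$ on $\Omega_{0}$ (blow-up at collisions from the singularity of $W$, growth at infinity from $U$ combined with the center-of-mass constraint) yields a minimizer in each connected component, and symmetric criticality together with translation invariance upgrades it to a critical point on all of $\Omega$. You fill in details the paper leaves implicit --- the telescoping bound relating $|u_j|$ to consecutive distances, and a direct proof of the Palais principle via equivariance of $\nabla V$ under the orthogonal action --- while the paper cites \cite{Pa79} and removes the constraint with Lagrange multipliers, but these are the same argument.
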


\begin{proof}
The restricted potential $V:\Omega_{0}\subset\mathbb{R}^{2(n-1)}%
\rightarrow\mathbb{R}$ satisfies $V(u)\rightarrow+\infty$ when $u_{j}%
\rightarrow u_{j+1}$, since the non-bonded interactions are such that
$\lim_{x\rightarrow0}W(x)=\infty$. Also, given that $\lim_{x\rightarrow\infty
}U(x)=\infty$, then $V(u)\rightarrow+\infty$ when $u\rightarrow\infty$.
Therefore, the potential $V$ is coercive and goes to infinity in the boundary
of $\Omega_{0}$. We conclude that $V$ has a minimizer in each connected
component of $\Omega_{0}^{H}$.

By the Palais principle of symmetric criticality \cite{Pa79}, each minimizer
is a critical point of $V$ in $\Omega_{0}$. Moreover, a critical point of $V$
in $\Omega_{0}$ satisfies $\nabla V(\mathbf{a})+\lambda_{1}\mathbf{e}%
_{1}+\lambda_{2}\mathbf{e}_{2}=0$ with $\mathbf{a}\in\Omega_{0}$. The
invariance of $V$ under translations implies that $\nabla V(\mathbf{a}%
)\cdot\mathbf{e}_{j}=0$ for $j=1,2$, so $\lambda_{j}=0$. Consequently, a
minimizer of $V$ in a connected component of $\Omega_{0}^{H}$ is a critical
point of $V$ in $\Omega$, i.e. $\nabla V(\mathbf{a})=0$.
\end{proof}

We have used the property of stratification of space to guaranty the existence
of a different equilibrium for each maximal isotropy group $H$ of $G$.

\subsection{The collinear chain}

In the case of Neumann conditions ($\sigma=0$), the potential $V$ is invariant
under the group
\[
G=\mathbb{Z}_{2}(\kappa)\times O(2)\text{,}%
\]
where $\mathbb{Z}_{2}(\kappa)$ is the group generated by the permutation given
by%
\[
\kappa(j)=n+1-j\text{.}%
\]

Therefore, the minimizer of $V$ is achieved in each connected component of
$\Omega_{0}^{H}$ with the maximal subgroup%
\[
H=\mathbb{Z}_{2}(\kappa,\pi)\times\mathbb{Z}_{2}(\tilde{\kappa})\text{.}%
\]
Since $(\kappa,\pi)$ and $\tilde{\kappa}$ generate $H$, the minimizer
$\mathbf{a}$ in the fixed point space have components $a_{j}\in\mathbb{R}$ and
$a_{n+1-j}=-a_{j}$.

In fact, the domain $\Omega_{0}^{H}$ has many connected components. Since, in
the chain, $u_{j}$ is coupled only to the adjacent $u_{j-1}$ and $u_{j+1}$, we
only consider the component
\[
\left\{  u\in\Omega_{0}\cap\mathrm{Fix}(H):u_{j}<u_{j+1}\right\}  \text{,}%
\]
which has physical meaning.

\begin{corollary}
There is a symmetric collinear equilibrium $\mathbf{a}=(a_{1},...,a_{n})$ such
that $a_{j}\in\mathbb{R}$ satisfy%
\[
a_{1}<a_{2}<...<a_{n}\text{,}%
\]
where $a_{1}=-a_{n}$, $a_{2}=-a_{n-1}$, etc.
\end{corollary}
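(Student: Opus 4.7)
The plan is to apply Theorem~\ref{The1} to the subgroup $H=\mathbb{Z}_{2}(\kappa,\pi)\times\mathbb{Z}_{2}(\tilde{\kappa})$ of $G=\mathbb{Z}_{2}(\kappa)\times O(2)$ identified in the preceding discussion, selecting the physical connected component $C=\{u\in\Omega_{0}\cap\mathrm{Fix}(H):u_{j}<u_{j+1}\}$.

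First I would unpack $\mathrm{Fix}(H)$ using the action (\ref{Ac}). The generator $\tilde{\kappa}$ acts by $u_{j}\mapsto Ru_{j}$, so imposing $\rho(\tilde{\kappa})u=u$ forces every $u_{j}$ to lie on the $x$-axis; identifying this axis with $\mathbb{R}$, I write $u_{j}=a_{j}\in\mathbb{R}$. The generator $(\kappa,\pi)$ sends the $j$-th component to $e^{-J\pi}u_{n+1-j}=-u_{n+1-j}$, so its fixed points are precisely the antisymmetric tuples with $a_{n+1-j}=-a_{j}$. The centering constraint $\sum u_{j}=0$ defining $\Omega_{0}$ is then automatic from this antisymmetry, so $\Omega_{0}^{H}$ reduces to the real antisymmetric tuples with pairwise distinct components.

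Next I would check that $C$ is a non-empty connected component of $\Omega_{0}^{H}$. Non-emptiness is immediate; for instance $a_{j}=j-(n+1)/2$ lies in $C$. Connectedness follows because $C$ is the intersection of the linear subspace $\mathrm{Fix}(H)$ with the convex open chamber $\{a_{1}<\cdots<a_{n}\}$, hence itself convex. It is a maximal connected subset of $\Omega_{0}^{H}$ because any configuration in a different ordering chamber can only be reached by a continuous path crossing a collision $a_{j}=a_{j+1}$, which is excluded from $\Omega_{0}$.

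Theorem~\ref{The1} then delivers a minimizer $\mathbf{a}\in C$ which is a critical point of $V$ in $\Omega$, i.e.\ an equilibrium. By construction, $\mathbf{a}$ inherits the antisymmetry $a_{n+1-j}=-a_{j}$ from $\mathrm{Fix}(H)$ and the ordering $a_{1}<\cdots<a_{n}$ from $C$, which is exactly the content of the corollary. I expect no real obstacle beyond the bookkeeping of identifying $H$ and the correct chamber: the analytic input (coercivity of $V$ on $\Omega_{0}$, blow-up at collisions, and the Palais principle of symmetric criticality) has already been absorbed into Theorem~\ref{The1}.
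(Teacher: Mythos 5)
Your proposal is correct and follows the same route as the paper: the corollary is obtained there exactly by applying Theorem~\ref{The1} to $H=\mathbb{Z}_{2}(\kappa,\pi)\times\mathbb{Z}_{2}(\tilde{\kappa})$ and selecting the ordered component $\{u\in\Omega_{0}\cap\mathrm{Fix}(H):u_{j}<u_{j+1}\}$, with the fixed-point space consisting of real antisymmetric tuples $a_{n+1-j}=-a_{j}$. Your additional checks (that the centering constraint is automatic from the antisymmetry, and that the chamber is a convex, hence connected, component of $\Omega_{0}^{H}$) are correct details the paper leaves implicit.
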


Since $\mathbf{a}$ is a minimizer in $\Omega_{0}^{H}$, whose dimension is
bigger than $(n-1)/2$, then the Hessian $D^{2}V(\mathbf{a)}$ has at least
$(n-1)/2$ positive eigenvalues.

\subsection{The circular chain}

For periodic boundary conditions ($\sigma=1$), the potential $V$ is invariant
under the group
\[
G=D_{n}\times O(2)\text{,}%
\]
where $D_{n}$ is the group generated by the permutations%
\[
\zeta(j)=j+1,\qquad\kappa(j)=n-j\text{,}%
\]
modulus $n$.

Then, there is a minimizer with isotropy group%
\[
\tilde{D}_{n}=\left\langle (\zeta,\zeta),(\kappa,\tilde{\kappa})\right\rangle
<G\text{,}%
\]
where $\zeta=2\pi/n$.

\begin{definition}
Let
\[
s_{k}=2\sin\frac{k\zeta}{2}=2\sin\frac{k\pi}{n}\text{.}%
\]
Then, $s_{k}^{2}=s_{-k}^{2}$ and%
\[
\left\vert 1-e^{ik\zeta}\right\vert ^{2}=2(1-\cos k\zeta)=4\sin^{2}%
k\zeta/2=s_{k}^{2}\text{.}%
\]

\end{definition}

We describe the circular equilibrium explicitly in the following proposition.

\begin{proposition}
We define $a_{j}=ae^{ij\zeta}$ for $j=1,...,n$ with $\zeta=2\pi/n$, where we
have identified the complex plane with the real plane. Let $a$ be such that
\[
U^{\prime}\left(  a^{2}s_{1}^{2}\right)  =-\frac{1}{2s_{1}^{2}}\sum
_{k=1}^{n-1}W^{\prime}\left(  a^{2}s_{k}^{2}\right)  s_{k}^{2}\text{,}%
\]
then $\mathbf{a}=(a_{1},...,a_{n})$ is a critical point of $V$.
\end{proposition}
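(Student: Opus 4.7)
The plan is to invoke Theorem 1 for the circular chain with the maximal isotropy subgroup $\tilde{D}_n = \langle(\zeta,\zeta),(\kappa,\tilde\kappa)\rangle < G$ already identified in the text, and reduce the problem to a one-variable optimization. First I would verify that $\mathrm{Fix}(\tilde{D}_n)\cap\Omega_0$ is precisely the one-dimensional line $\{u_j = a e^{ij\zeta} : a\in\mathbb{R}\setminus\{0\}\}$. Invariance under $(\zeta,\zeta)$ forces $u_{j+1} = e^{i\zeta}u_j$, hence $u_j = u_1 e^{i(j-1)\zeta}$, and invariance under $(\kappa,\tilde{\kappa})$ then fixes the phase of $u_1$, forcing $u_1 = a e^{i\zeta}$ with $a\in\mathbb{R}$.

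Next I would restrict $V$ to this line and compute it explicitly. Using $|a_{j+1}-a_j|^2 = a^2 s_1^2$ for all $j$ (and noting that with $\sigma=1$ the wrap-around bond is of the same form), the bond part contributes $nU(a^2 s_1^2)$. For the pair interactions, $|a_j-a_k|^2 = a^2 s_{j-k}^2$; grouping pairs by $m = k-j \in\{1,\ldots,n-1\}$ gives $n-m$ contributions of $W(a^2 s_m^2)$, and the symmetry $s_{n-m}=s_m$ symmetrizes this to
\[
\sum_{1\le j<k\le n}W(a^2 s_{k-j}^2) \;=\; \frac{n}{2}\sum_{k=1}^{n-1}W(a^2 s_k^2).
\]
Therefore the restriction of $V$ to the fixed-point line equals
\[
\Phi(a) \;=\; n\,U(a^2 s_1^2) + \frac{n}{2}\sum_{k=1}^{n-1}W(a^2 s_k^2).
\]

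Differentiating in $a$ and factoring out $2na$ yields
\[
\Phi'(a) \;=\; 2na\left[s_1^2\,U'(a^2 s_1^2) + \frac{1}{2}\sum_{k=1}^{n-1}s_k^2\,W'(a^2 s_k^2)\right],
\]
so the condition $\Phi'(a)=0$ with $a\ne 0$ is exactly the equation in the proposition after dividing by $s_1^2$. By the Palais principle of symmetric criticality, as exploited in the proof of Theorem~1, a critical point of $V$ on $\mathrm{Fix}(\tilde{D}_n)\cap\Omega_0$ is a critical point of $V$ on $\Omega_0$; translation invariance of $V$ then upgrades it to a critical point on $\Omega$, i.e.\ $\nabla V(\mathbf{a})=0$.

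The main obstacle, which is really a bookkeeping step rather than a conceptual one, is the symmetrization of the double sum of pair interactions: one has to count pairs $(j,k)$ with $k-j=m$ correctly and invoke $s_{n-m}^2 = s_m^2$ to collapse the weighted sum $\sum_{m=1}^{n-1}(n-m)W(a^2 s_m^2)$ into the clean form $\tfrac{n}{2}\sum_{k=1}^{n-1}W(a^2 s_k^2)$. Once that identity is in hand, the proposition follows from a single-variable differentiation.
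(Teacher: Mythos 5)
Your proof is correct, but it takes a genuinely different route from the paper's. The paper proves the proposition by a direct computation of the gradient: it evaluates $V_{u_j}$ at the polygonal configuration, using $a_j-a_k=a_j(1-e^{i(k-j)\zeta})$ to show that
\[
V_{u_{j}}(\mathbf{a})=a_{j}\Bigl(2U^{\prime}\bigl(a^{2}s_{1}^{2}\bigr)s_{1}^{2}+\sum_{k=1}^{n-1}W^{\prime}\bigl(a^{2}s_{k}^{2}\bigr)s_{k}^{2}\Bigr)\text{,}
\]
so the hypothesis makes the scalar factor vanish and hence $\nabla V(\mathbf{a})=0$ with no appeal to symmetric criticality. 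You instead identify $\mathrm{Fix}(\tilde{D}_n)$ as a one-dimensional line, restrict $V$ to it (the pair-counting and the symmetrization $\sum_{m}(n-m)W(a^2s_m^2)=\tfrac{n}{2}\sum_k W(a^2s_k^2)$ via $s_{n-m}=s_m$ are both correct), and invoke Palais to upgrade the one-variable critical point equation $\Phi'(a)=0$ to $\nabla V(\mathbf{a})=0$. Your route buys a cleaner conceptual picture -- the equation in the proposition is exposed as exactly the criticality condition for a scalar function, so it also characterizes \emph{all} critical points along the symmetric line -- and it meshes with the framework of Theorem~\ref{The1}; its cost is the dependence on the symmetric criticality principle and the combinatorial bookkeeping of the double sum. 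The paper's computation is more pedestrian but self-contained, and it yields the explicit form of $V_{u_j}(\mathbf{a})$, which is reused implicitly when the Hessian blocks are computed in the linearization section. One small remark: since $\mathrm{Fix}(\tilde{D}_n)$ automatically has zero center of mass, you could apply Palais directly on $\Omega$ and skip the final "translation invariance upgrades it to $\Omega$" step, which is only needed in Theorem~\ref{The1} because there the restriction to $\Omega_0$ is imposed by hand.
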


\begin{proof}
We have that%
\[
a_{j}-a_{k}=a_{j}(1-e^{i(k-j)\zeta})\text{ and }\left\vert a_{j}%
-a_{k}\right\vert =as_{k-j}\text{.}%
\]
Then
\begin{align*}
\sum_{k=j\pm1}U^{\prime}(\left\vert a_{j}-a_{k}\right\vert ^{2})2\left(
a_{j}-a_{k}\right)   &  =a_{j}U^{\prime}\left(  a^{2}s_{1}^{2}\right)
\sum_{k=j\pm1}2(1-e^{i(k-j)\zeta})\\
&  =a_{j}U^{\prime}\left(  a^{2}s_{1}^{2}\right)  \left(  2s_{1}^{2}\right)
\text{,}%
\end{align*}
and%
\begin{align*}
\sum_{k\neq j}W^{\prime}(\left\vert a_{j}-a_{k}\right\vert ^{2})2\left(
a_{j}-a_{k}\right)   &  =a_{j}\sum_{k=1(k\neq j)}^{n}W^{\prime}\left(
a^{2}s_{k-j}^{2}\right)  2(1-e^{i(k-j)\zeta})\\
&  =a_{j}\left(  \sum_{k=1}^{n-1}W^{\prime}\left(  a^{2}s_{k}^{2}\right)
s_{k}^{2}\right)  \text{.}%
\end{align*}
Therefore, the derivative of $V$ at $\mathbf{a}$ is
\[
V_{u_{j}}(\mathbf{a})=a_{j}\left(  2U^{\prime}\left(  a^{2}s_{1}^{2}\right)
s_{1}^{2}+\sum_{k=1}^{n-1}W^{\prime}\left(  a^{2}s_{k}^{2}\right)  s_{k}%
^{2}\right)  \text{.}%
\]
The proposition follows from observing that the term in parenthesis is
independent of $j$.
\end{proof}

The previous proposition is independent of the particular form of the
potentials $U$ and $W$. For the bonding potential we have $U^{\prime
}(x)=1-x^{-1/2}$.

\begin{corollary}
Let
\[
S(a)=\frac{1}{as_{1}}-\frac{1}{2s_{1}^{2}}\sum_{k=1}^{n-1}W^{\prime}\left(
a^{2}s_{k}^{2}\right)  s_{k}^{2}\text{.}%
\]
Using the properties of $W$, we have that $S(a)\rightarrow\infty$ as
$a\rightarrow0$ and $S(a)\rightarrow0$ as $a\rightarrow+\infty$. Then, there
is at least one solution of $S(\alpha)=1$, and $\mathbf{a}=(a_{1},...,a_{n})$
is a circular equilibrium, where $a_{j}=\alpha e^{ij\zeta}$.
\end{corollary}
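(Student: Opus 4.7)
The plan is to reduce the equation $S(\alpha)=1$ to the critical point equation of the preceding proposition, then apply the intermediate value theorem on $(0,\infty)$.

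First I would substitute the explicit bonding potential into the criticality condition. Since $U(x)=x-2x^{1/2}$, one has $U'(x)=1-x^{-1/2}$, so
\[
U'(a^{2}s_{1}^{2})=1-\frac{1}{as_{1}}\qquad (a>0).
\]
The equation in the proposition thus becomes
\[
1-\frac{1}{as_{1}}=-\frac{1}{2s_{1}^{2}}\sum_{k=1}^{n-1}W'(a^{2}s_{k}^{2})s_{k}^{2},
\]
which rearranges precisely to $S(a)=1$. Hence any $\alpha>0$ solving $S(\alpha)=1$ yields a circular equilibrium by the proposition.

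Next I would verify the two limiting behaviors of $S$. Since $n\geq 2$ gives $s_{1}=2\sin(\pi/n)>0$, the map $a\mapsto 1/(as_{1})$ is smooth on $(0,\infty)$ and tends to $+\infty$ as $a\to 0^{+}$ and to $0$ as $a\to\infty$. For the sum, the hypothesis $\lim_{x\to 0}-W'(x)=\infty$ forces $-W'(a^{2}s_{k}^{2})s_{k}^{2}\to+\infty$ for each $k$ (note $s_{k}^{2}>0$ for $k=1,\dots,n-1$), so the second summand in $S(a)$ contributes $+\infty$ as $a\to 0^{+}$; combined with the first term this gives $S(a)\to+\infty$. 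Similarly, $\lim_{x\to\infty}W'(x)=0$ yields $S(a)\to 0$ as $a\to\infty$.

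Finally, continuity of $S$ on $(0,\infty)$ follows from $W\in C^{2}$ and the positivity of $s_{1}$, so the intermediate value theorem produces some $\alpha\in(0,\infty)$ with $S(\alpha)=1$. Setting $a_{j}=\alpha e^{ij\zeta}$ and invoking the proposition delivers the desired circular equilibrium. The argument has no real obstacle: the only place where care is needed is to observe that both $1/(as_{1})$ and $-\tfrac{1}{2s_{1}^{2}}\sum W'(a^{2}s_{k}^{2})s_{k}^{2}$ tend to $+\infty$ as $a\to 0^{+}$ (so there is no cancellation between them), which is exactly what the hypotheses on $W$ guarantee.
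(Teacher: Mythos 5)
Your proposal is correct and follows essentially the same route as the paper: substitute $U'(x)=1-x^{-1/2}$ into the criticality condition of the preceding proposition to identify it with $S(a)=1$, then use the hypotheses (\ref{W}) on $W$ to get $S(a)\to+\infty$ as $a\to 0^{+}$ and $S(a)\to 0$ as $a\to\infty$, and conclude by the intermediate value theorem. Your explicit remark that both terms of $S$ tend to $+\infty$ without cancellation is a useful clarification of what the paper leaves implicit.
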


The polygon $\mathbf{a}$ is a local minimizer in $\Omega_{0}^{H}$. Therefore,
the matrix $D^{2}V(\mathbf{a)}$ has at least $1$ positive eigenvalue.

\begin{remark}
Depending on the form of $W$ there may exist other circular equilibria. Other
critical points different to the circular chain exist in this case; for
example, there is a symmetric collinear equilibrium in the fixed point space
of $H=\mathbb{Z}_{2}(\kappa,\pi)\times\mathbb{Z}_{2}(\tilde{\kappa})$.
\end{remark}

\section{Linearization}

We define $A_{ij}$ as the $2\times2$ minors of the Hessian $D^{2}%
V(\mathbf{a})\in M_{\mathbb{R}}(2n)$, i.e.
\[
D^{2}V(\mathbf{a})=(A_{ij})_{ij=1}^{n}\text{.}%
\]
Due to the form of the potential $V$, the minors satisfy%
\[
A_{ii}=-\sum_{j\neq i}A_{ij}\text{.}%
\]

\subsection{The collinear chain}

When $\mathbf{a}$ corresponds to the collinear equilibrium, the Hessian
$D^{2}V(\mathbf{a)}$ is a linear $\mathbb{Z}_{2}(\tilde{\kappa})$-equivariant
map. This implies that $A_{ij}R=RA_{ij}$. Then, $A_{ij}$ is diagonal and
$D^{2}V(\mathbf{a)}$ has an equivalent form as $\mathrm{diag}(M_{0},M_{1})$,
where the blocks $M_{k}$ correspond to different representations of
$\mathbb{Z}_{2}(\tilde{\kappa})$.

Let $\delta_{ij}=1$ if $\left\vert i-j\right\vert =1$ and $\delta_{ij}=0$
otherwise. Then
\[
V=\sum_{1\leq j<k\leq n}[\delta_{jk}U+W](\left\vert u_{j}-u_{k}\right\vert
^{2})\text{,}%
\]
where%
\[
\lbrack\delta_{jk}U+W](\left\vert u_{j}-u_{k}\right\vert ^{2})=\delta
_{jk}U(\left\vert u_{j}-u_{k}\right\vert ^{2})+W(\left\vert u_{j}%
-u_{k}\right\vert ^{2})\text{.}%
\]
Therefore,
\[
\nabla_{u_{j}}V(u)=\sum_{k\neq j}2\left(  u_{j}-u_{k}\right)  [\delta
_{kj}U+W]^{\prime}(\left\vert u_{j}-u_{k}\right\vert ^{2})\text{.}%
\]

If $i\neq j$, then
\begin{align*}
-A_{ij}  &  =-D_{u_{i}}\nabla_{u_{j}}V=-D_{u_{i}}2\left(  u_{j}-u_{i}\right)
[\delta_{ij}U+W]^{\prime}(\left\vert u_{j}-u_{i}\right\vert ^{2})\\
&  =2[\delta_{ij}U+W]^{\prime}\left(  (a_{j}-a_{i})^{2}\right)  I+4\left(
a_{j}-a_{i}\right)  ^{2}[\delta_{ij}U+W]^{\prime\prime}(\left(  a_{j}%
-a_{i}\right)  ^{2})\mathrm{diag}(1,0)\text{.}%
\end{align*}
Explicitly, we have the following equivalence of matrices.

\begin{proposition}
The matrix $D^{2}V(\mathbf{a)}$ is equivalent to
\[
\mathrm{diag}(M_{0},M_{1}),
\]
where $M_{0}=(a_{ij})_{i,j}^{n}$, $M_{1}=(b_{ij})_{i,j=1}^{n}$, and
\[
-a_{ij}=2[\delta_{ij}U+W]^{\prime}(\left(  a_{j}-a_{i}\right)  ^{2})+4\left(
a_{j}-a_{i}\right)  ^{2}[\delta_{ij}U+W]^{\prime\prime}(\left(  a_{j}%
-a_{i}\right)  ^{2})
\]
and
\[
-b_{ij}=2[\delta_{ij}U+W]^{\prime}(\left(  a_{j}-a_{i}\right)  ^{2})
\]
for $i\neq j$. Moreover, $a_{ii}=\sum_{j\neq i}-a_{ij}$ and $b_{ii}%
=\sum_{j\neq i}-b_{ij}$.
\end{proposition}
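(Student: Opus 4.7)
The plan is to combine the explicit formula for $-A_{ij}$ derived just before the statement with the $\mathbb{Z}_2(\tilde{\kappa})$-equivariance noted at the beginning of the subsection. Since the collinear equilibrium $\mathbf{a}$ has all components on the x-axis, it is fixed by the action $u_j \mapsto R u_j$, so $D^2 V(\mathbf{a})$ intertwines the block-diagonal action $R^{\oplus n}$. Because $R = \mathrm{diag}(1,-1)$ has distinct eigenvalues $\pm 1$, each $2\times 2$ block $A_{ij}$ must be diagonal. This is also directly visible in the explicit formula: the vector $a_j - a_i$ lies along the first coordinate axis, so both the $I$-term and the outer-product term $(a_j - a_i)^2\,\mathrm{diag}(1,0)$ of $-A_{ij}$ are diagonal.

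Next I would decompose $\mathbb{R}^{2n}$ as the direct sum of the $R$-fixed subspace of x-coordinates and the $R$-antifixed subspace of y-coordinates, and reorder the standard basis so that the $n$ x-coordinates precede the $n$ y-coordinates. Under this conjugation, $D^2V(\mathbf{a})$ takes the block-diagonal form $\mathrm{diag}(M_0, M_1)$, where the $(i,j)$ entry of $M_0$ is the $(1,1)$ entry of $A_{ij}$ and the $(i,j)$ entry of $M_1$ is the $(2,2)$ entry of $A_{ij}$. Reading these two entries off the displayed expression for $-A_{ij}$ produces exactly
\[
-a_{ij} = 2[\delta_{ij}U + W]'\bigl((a_j - a_i)^2\bigr) + 4(a_j - a_i)^2 [\delta_{ij}U + W]''\bigl((a_j - a_i)^2\bigr)
\]
and $-b_{ij} = 2[\delta_{ij}U + W]'((a_j - a_i)^2)$ for $i \neq j$, as claimed.

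For the diagonal terms $i=j$, I would invoke the identity $A_{ii} = -\sum_{j \neq i} A_{ij}$ already recorded at the start of the section, which follows from the translation invariance $V(u + (c,\ldots,c)) = V(u)$ by differentiating twice and evaluating at $\mathbf{a}$. Taking the $(1,1)$ and $(2,2)$ entries of this vector identity gives $a_{ii} = \sum_{j\neq i}(-a_{ij})$ and $b_{ii} = \sum_{j\neq i}(-b_{ij})$, completing the description of $M_0$ and $M_1$.

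The proof is essentially bookkeeping: the equivariance dictates the block-diagonal structure and the entries of $M_0$ and $M_1$ are read off the preceding computation. There is no genuine analytic obstacle; the only point requiring care is the vanishing of the off-diagonal entries of each $A_{ij}$, which is guaranteed either abstractly by the $\mathbb{Z}_2(\tilde{\kappa})$-equivariance or concretely by the fact that $a_j - a_i$ is parallel to the first coordinate axis.
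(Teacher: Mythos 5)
Your proposal is correct and follows essentially the same route as the paper, which likewise deduces that each $2\times2$ minor $A_{ij}$ is diagonal from the $\mathbb{Z}_{2}(\tilde{\kappa})$-equivariance (i.e. $A_{ij}R=RA_{ij}$), reads the entries of $M_{0}$ and $M_{1}$ off the displayed formula for $-A_{ij}$, and uses the translation-invariance identity $A_{ii}=-\sum_{j\neq i}A_{ij}$ for the diagonal terms. The paper treats the proposition as immediate from the preceding paragraph; your write-up simply makes the basis reordering and the two justifications for diagonality explicit.
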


\subsection{The circular chain}

The circular chain in real coordinates is given by $\mathbf{a}=(a_{1}%
,...,a_{n})$, where $a_{j}=ae^{Jj\zeta}e_{1}$ and $e_{1}=(1,0)\in
\mathbb{R}^{2}$.

\begin{proposition}
Using the transform%
\begin{equation}
T_{k}(z)=(n^{-1/2}e^{(ikI+J)\zeta}z,...,n^{-1/2}e^{n(ikI+J)\zeta}%
z):\mathbb{C}^{2}\rightarrow\mathbb{C}^{2n}\text{,} \label{T}%
\end{equation}
the matrix $D^{2}V(\mathbf{a})$, as a matrix in $M_{\mathbb{C}}(2n\mathbb{)}$,
satisfies that
\[
D^{2}V(\mathbf{a})T_{k}(z)=T_{k}(M_{k}z),
\]
i.e. the Hessian decomposes in blocks%
\[
M_{k}=\sum_{j=1}^{n-1}(-A_{nj})(I-e^{j(ikI+J)\zeta})\text{ for }%
k\in\{1,...,n\}\text{.}%
\]

\end{proposition}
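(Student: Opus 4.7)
The plan is to exploit the cyclic rotational symmetry of the circular equilibrium in order to block-diagonalize $D^{2}V(\mathbf{a})$ in the ``rotating Fourier basis'' provided by $\{T_{k}\}$. The first step is covariance of the Hessian blocks under cyclic shift: because $a_{j+1}-a_{i+1}=e^{J\zeta}(a_{j}-a_{i})$ and every pairwise term of $V$ depends only on $|u_{j}-u_{i}|^{2}$, together with the fact that the bond indicator $\delta_{ij}$ is cyclic on the circular chain, direct differentiation of the formula for $A_{ij}$ yields
\[
A_{i+1,\,j+1}=e^{J\zeta}\,A_{i,j}\,e^{-J\zeta}
\]
with indices read modulo $n$. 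Iterating this identity gives the single master relation $A_{m,j}=e^{Jm\zeta}\,A_{n,\,j-m}\,e^{-Jm\zeta}$.

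Next, set $R:=e^{(ikI+J)\zeta}$, so that the $m$-th $\mathbb{C}^{2}$-block of $T_{k}(z)$ is $n^{-1/2}R^{m}z$. Since $iI$ commutes with $J$, one has $R^{\ell}=e^{ik\ell\zeta}e^{J\ell\zeta}$ and in particular $R^{n}=e^{2\pi ik}\,e^{2\pi J}=I$. The $m$-th block of $D^{2}V(\mathbf{a})\,T_{k}(z)$ is $n^{-1/2}\sum_{j=1}^{n}A_{m,j}R^{j}z$; inserting the covariance identity of Step~1, pulling $e^{Jm\zeta}$ out to the left, and reindexing $\ell=j-m \pmod{n}$ (legitimate precisely because $R^{n}=I$) converts it into
\[
n^{-1/2}R^{m}\Bigl(\sum_{\ell=0}^{n-1}A_{n,\ell}\,R^{\ell}\Bigr)z,
\]
which is exactly the $m$-th block of $T_{k}(M_{k}'z)$ with $M_{k}':=\sum_{\ell=0}^{n-1}A_{n,\ell}R^{\ell}$. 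This already shows that the image of $T_{k}$ is an invariant subspace of $D^{2}V(\mathbf{a})$.

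Finally, using $R^{0}=I$ together with the row-sum identity $A_{n,n}=-\sum_{j=1}^{n-1}A_{n,j}$, one rewrites
\[
M_{k}'=A_{n,n}+\sum_{j=1}^{n-1}A_{n,j}R^{j}=\sum_{j=1}^{n-1}(-A_{n,j})(I-R^{j})=M_{k},
\]
which matches the stated formula. I expect the main obstacle to be purely bookkeeping: tracking indices modulo $n$ and verifying that the cyclic reindexing introduces no residual phase, which works because both $e^{2\pi ik}=1$ (integer $k$) and $e^{2\pi J}=I$ (full rotation). Beyond this, everything is a direct consequence of the rotation covariance established in the first step.
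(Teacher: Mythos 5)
Your proof is correct and takes essentially the same route as the paper: the shift--rotation covariance $A_{i+1,\,j+1}=e^{J\zeta}A_{i,j}e^{-J\zeta}$ you establish in the first step is precisely the $\mathbb{Z}_{n}(\zeta,\zeta)$-equivariance of $D^{2}V(\mathbf{a})$ that the paper invokes, and the reindexing plus the row-sum identity $A_{nn}=-\sum_{j\neq n}A_{nj}$ is exactly the computation the paper delegates to Proposition 7 of \cite{GaIz11}. The only (cosmetic) caveat is that you reuse the symbol $R$ for $e^{(ikI+J)\zeta}$, which the paper already reserves for the reflection $\mathrm{diag}(1,-1)$.
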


\begin{proof}
We use that $D^{2}V(\mathbf{a})\in M_{\mathbb{R}}(2n\mathbb{)}$ is
$\mathbb{Z}_{n}(\zeta,\zeta)$-equivariant. See Proposition 7 of \cite{GaIz11}.
\end{proof}

This formula has been used to study the stability of the polygonal equilibrium
for bodies and vortices in \cite{GaIz13a,GaIz13b}. In the next proposition, we
calculate $M_{k}$ explicitly.

\begin{proposition}
Let $\delta_{j}=1$ if $j=1,n-1$ and $\delta_{j}=0$ otherwise, and%
\[
b_{j}=2[\delta_{j}U+W]^{\prime}\left(  a^{2}s_{j}^{2}\right)  ,\qquad
c_{j}=2s_{j}^{2}[\delta_{j}U+W]^{\prime\prime}\left(  a^{2}s_{j}^{2}\right)
\text{.}%
\]
Then,
\[
M_{k}=\alpha_{k}I+\beta_{k}R-\gamma_{k}(iJ)\text{,}%
\]
where%
\begin{align*}
\alpha_{k}  &  =\sum_{j=1}^{n-1}\left(  b_{j}+c_{j}\right)  \left(  1-\cos
kj\zeta\cos j\zeta\right)  \text{,}\\
\gamma_{k}  &  =\sum_{j=1}^{n-1}\left(  b_{j}+c_{j}\right)  \sin jk\zeta\sin
j\zeta\text{,}\\
\beta_{k}  &  =\sum_{j=1}^{n-1}c_{j}\left(  \cos jk\zeta-\cos j\zeta\right)
\text{.}%
\end{align*}

\end{proposition}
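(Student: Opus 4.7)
The plan is to start from the formula $M_k = \sum_{j=1}^{n-1}(-A_{nj})(I - e^{j(ikI+J)\zeta})$ of the previous proposition and carry out the matrix product explicitly in the basis $\{I, R, J, JR\}$ of complex $2\times 2$ matrices.

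The first step is to evaluate $-A_{nj}$ at the circular equilibrium. A direct differentiation of $V$ yields
\[
-A_{nj} = 2[\delta_j U + W]'(|a_j - a_n|^2)\, I + 4[\delta_j U + W]''(|a_j - a_n|^2)\,(a_j - a_n)(a_j - a_n)^T.
\]
Since $a_j - a_n = a(e^{Jj\zeta} - I)e_1$ we have $|a_j - a_n|^2 = a^2 s_j^2$, and a short trigonometric calculation expresses the outer product as $\tfrac{1}{2}|a_j - a_n|^2 (I - \cos(j\zeta)\,R - \sin(j\zeta)\,JR)$, so that $-A_{nj}$ becomes a linear combination of $I$, $R$ and $JR$ with coefficients built from $b_j$ and $c_j$. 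Next I would expand $e^{j(ikI+J)\zeta} = e^{ijk\zeta}(\cos(j\zeta)\,I + \sin(j\zeta)\,J)$ and multiply out. Using the identities $J^2 = -I$, $RJ = -JR$ and $JRJ = R$, the $JR$ contributions coming from $(-A_{nj})\,e^{Jj\zeta}$ collapse via $\cos^2 + \sin^2 = 1$ to a single $R$ term, so the product is a clean combination of $I$, $R$, $J$ and $JR$ with complex coefficients.

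Finally, summing over $j = 1, \ldots, n-1$ and invoking the symmetry $j \leftrightarrow n - j$ --- under which $b_j$, $c_j$, $\cos(j\zeta)$ and $\cos(jk\zeta)$ are invariant while $\sin(j\zeta)$ and $\sin(jk\zeta)$ change sign --- kills the $JR$ coefficient outright and forces the imaginary parts of the $I$ and $R$ coefficients, as well as the real part of the $J$ coefficient, to vanish. What remains is exactly $\alpha_k I + \beta_k R - \gamma_k (iJ)$ with the three sums stated. The main obstacle is organisational rather than conceptual: managing the four-term expansion and applying the $j \leftrightarrow n - j$ pairing correctly to discard the four terms absent from the final form. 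These cancellations reflect the extra reflection symmetry $(\kappa, \tilde{\kappa})$ of the polygon $\mathbf{a}$, so the computation merely makes an already forced structural constraint on $M_k$ explicit.
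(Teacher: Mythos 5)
Your plan is correct and is essentially the paper's own proof in slightly different notation: the paper likewise starts from $M_k=\sum_{j=1}^{n-1}(-A_{nj})(I-e^{j(ikI+J)\zeta})$, writes the outer product as $a^2C_j$ with $C_j=\tfrac{s_j^2}{2}(I-e^{jJ\zeta}R)$ (your $I-\cos(j\zeta)R-\sin(j\zeta)JR$), collapses the cross terms via $Re^{J\theta}=e^{-J\theta}R$ (your $\cos^2+\sin^2=1$ cancellation), and then uses $b_{n-j}=b_j$, $c_{n-j}=c_j$ with the $j\leftrightarrow n-j$ pairing to reduce to $\alpha_kI+\beta_kR-\gamma_k(iJ)$. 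No gaps; the remaining work is exactly the bookkeeping you identify.
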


\begin{proof}
First we need to calculate $A_{nj}=D_{u_{n}}V_{u_{j}}(\mathbf{a})$. In real
coordinates, $a_{j}=ae^{Jj\zeta}e_{1}$. Then,
\[
\left(  a_{n}-a_{j}\right)  ^{T}\left(  a_{n}-a_{j}\right)  =a^{2}\left(
I-e^{Jj\zeta}\right)  ^{T}e_{1}^{T}e_{1}\left(  I-e^{Jj\zeta}\right)
=a^{2}C_{j}\text{,}%
\]
where $C_{j}$ is the matrix
\[
C_{j}=\left(  I-e^{Jj\zeta}\right)  ^{T}\mathrm{diag}(1,0)\left(
I-e^{Jj\zeta}\right)  \text{.}%
\]

For $j \neq n$,
\[
\nabla_{u_{n}}V(u)=\sum_{k=1}^{n-1}[\delta_{k}U+W]^{\prime}(\left\vert
u_{n}-u_{k}\right\vert ^{2})2\left(  u_{n}-u_{k}\right)  .
\]
Evaluating this expression at $\mathbf{a}$,
\[
-A_{nj} = 2[\delta_{j}U+W]^{\prime}(a^{2}s_{j}^{2})I+4a^{2}[\delta
_{j}U+W]^{\prime\prime}\left(  a^{2}s_{j}^{2}\right)  C_{j}\text{.}
\qquad(j\neq n)
\]
Finally,
\[
-A_{nj}=b_{j}I+\left(  2c_{j}/s_{j}^{2}\right)  C_{j}. \qquad(j\neq n)
\]

The matrix $C_{j}$ can be written explicitly as%
\[
C_{j}=\left(
\begin{array}
[c]{cc}%
(1-\cos j\zeta)^{2} & -(1-\cos j\zeta)\sin j\zeta\\
-(1-\cos j\zeta)\sin j\zeta & (\sin j\zeta)^{2}%
\end{array}
\right)  \text{.}%
\]
By means of the relation $\sin^{2}j\zeta=(1-\cos j\zeta)(1+\cos j\zeta)$,%
\[
C_{j}=(1-\cos j\zeta)\left(
\begin{array}
[c]{cc}%
(1-\cos j\zeta) & -\sin j\zeta\\
-\sin j\zeta & (1+\cos j\zeta)
\end{array}
\right)  =\frac{s_{j}^{2}}{2}(I-e^{jJ\zeta}R)\text{.}%
\]
Therefore, $-A_{nj}=\left(  b_{j}+c_{j}\right)  I-c_{j}e^{jJ\zeta}R$ and
\[
B_{k}=\sum_{j=1}^{n-1}\left(  b_{j}+c_{j}\right)  (I-e^{j(ikI+J)\zeta}%
)+c_{j}(e^{jikI}-e^{jJ\zeta})R.
\]
Since $b_{j}$ and $c_{j}$ satisfy $b_{n-j}=b_{j}$, $c_{n-j}=c_{j}$, using the
equalities%
\begin{align*}
e^{-(jJ\zeta)}+e^{(jJ\zeta)}  &  =2I\cos j\zeta\text{,}\\
e^{j(ikI+J)\zeta}+e^{-j(ikI+J)\zeta}  &  =2I\cos jk\zeta\cos j\zeta+2iJ\sin
jk\zeta\sin j\zeta,
\end{align*}
we can conclude that $M_{k}=\alpha_{k}I+\beta_{k}R-\gamma_{k}(iJ)$.
\end{proof}

The Hessian $D^{2}V(\mathbf{a})$ is $\mathbb{Z}_{2}(\kappa,\tilde{\kappa}%
)$-equivariant, so $M_{n-k}R=RM_{k}$. This means that blocks $M_{k}$ and
$M_{n-k}$ have the same eigenvalues $\lambda_{k}^{\pm}=\alpha_{k}\pm
\sqrt{\beta_{k}^{2}+\gamma_{k}^{2}}$. We can choose the corresponding
eigenvectors $v_{k}^{\pm}$ such that
\begin{equation}
Rv_{k}^{\pm}=-v_{n-k}^{\pm}\text{ and }\bar{v}_{k}^{\pm}=v_{n-k}^{\pm}.
\label{eig}%
\end{equation}

Actually, the real matrix $D^{2}V(\mathbf{a})\in M_{\mathbb{R}}(2n\mathbb{)}$
is equivalent to the matrix%
\[
\mathrm{diag}(M_{1},...,M_{n/2},M_{n})\text{, }%
\]
where $M_{k}\in M_{\mathbb{C}}(2\mathbb{)}$ for $k\in\lbrack1,n/2)\cap
\mathbb{N}$ and $M_{n/2},M_{n}\in M_{\mathbb{R}}(2\mathbb{)}$.

\section{Bifurcation of periodic solutions}

The bifurcation of periodic solutions corresponds to zeros of%
\[
f(x;\nu)=-\ddot{x}-\nu^{-2}\nabla V(\mathbf{a}+x)\text{,}%
\]
where $u(t)=\mathbf{a}+x(\nu t)$ and $\Omega=\{x\in\mathbb{R}^{2n}:x_{j}\neq
x_{i}\}$. To manage the translational symmetries, we define the restriction of
$f$ to the subspace
\[
X=\{x\in L_{2\pi}^{2}(\Omega):\int_{0}^{2\pi}\sum_{j=1}^{n}x_{j}dt=0\}\text{.}%
\]
The operator $f(x):H_{2\pi}^{2}\cap X\rightarrow X$ is well define because,
for $x\in X$,%
\[
\int_{0}^{2\pi}\sum_{j=1}^{n}f_{j}=\int_{0}^{2\pi}\sum_{j=1}^{n}V_{u_{j}}=0
\]
and $f\in X$.

For an equilibrium $\mathbf{a}$, $\nabla V(\mathbf{a})=0$. So $f(\mathbf{0}%
;\nu)=0$ for any $\nu$. Therefore,%
\[
f(x;\nu)=-\ddot{x}-\nu^{-2}D^{2}V(\mathbf{a})x+g(x)\text{,}%
\]
where $g(x)=O(\left\vert x\right\vert ^{2})$.

Let $K:L_{2\pi}^{2}\rightarrow H_{2\pi}^{2}$ be given in the Fourier basis
$x=\sum_{l\in\mathbb{Z}}x_{l}e^{ilt}\in L_{2\pi}^{2}$ as%
\[
Kx=x_{0}+\sum_{l\in\mathbb{Z}\backslash\{0\}}l^{-2}x_{l}e^{ilt}\text{.}%
\]
Since $K:H_{2\pi}^{2}\rightarrow H_{2\pi}^{2}$ is compact, the operator%
\[
Kf(x,\nu)=x-T(\nu)x+g(x):H_{2\pi}^{2}(\Omega)\times\mathbb{R}^{+}\rightarrow
H_{2\pi}^{2}(\Omega)
\]
is well defined, where
\[
T(\nu)x=\left(  \nu^{-2}D^{2}V(\mathbf{a})+I\right)  x_{0}+\sum_{l\in
\mathbb{Z}\backslash\{0\}}(l\nu)^{-2}D^{2}V(\mathbf{a})x_{l}e^{ilt}%
\]
is a linear \emph{compact map} and $g(x)=\mathcal{O}(\left\vert x\right\vert
_{H_{2\pi}^{2}}^{2})$ is a \emph{nonlinear compact} map.

We define the action of the group
\[
D_{n}\times O(2)\times O(2)\text{,}%
\]
in $X$ by $\rho(\gamma)x(t)$ for $\gamma\in D_{n}\times O(2)$ given in
(\ref{Ac}) and%
\begin{equation}
\rho(\varphi)x(t)=x(t+\varphi)\text{,\qquad}\rho(\bar{\kappa}%
)x(t)=x(-t)\text{.} \label{AcT}%
\end{equation}

\begin{definition}
We say that $\nu_{0}^{2}$ is a \emph{non-resonant} eigenvalue of
$D^{2}V(\mathbf{a})$ if $l^{2}\nu_{0}^{2}$ is not an eigenvalue of
$D^{2}V(\mathbf{a})$ for any integer $l\geq2$.
\end{definition}

\subsection{The collinear chain}

The map $Kf$ is equivariant under the action of
\[
\Gamma=\mathbb{Z}_{2}\times O(2)\times O(2)\text{,}%
\]
where $\mathbb{Z}_{2}$ is generated by the permutation $\kappa(j)=n+1-j$. We
used that the isotropy group of the collinear equilibrium $\mathbf{a}$ is%
\[
\Gamma_{\mathbf{a}}=\mathbb{Z}_{2}(\kappa,\pi)\times\mathbb{Z}_{2}%
(\tilde{\kappa})\times O(2)
\]
to show that $D^{2}V(\mathbf{a)}$ is equivalent to $\mathrm{diag}(M_{0}%
,M_{1})$, where $M_{0}$ has a zero-eigenvalue corresponding to translations
and $M_{1}$ has two zero-eigenvalues corresponding to translations and rotations.

\begin{theorem}
\label{The2}Assume that $D^{2}V(\mathbf{a})$ has only three zero eigenvalues,
two corresponding to translations and one to rotations. If $\nu_{k}^{2}$ is a
simple non-resonant positive eigenvalue of $D^{2}V(\mathbf{a})$ corresponding
to the block $M_{k}\in M_{\mathbb{R}}(n)$, the equations have a global
bifurcation emanating from $(0,\nu_{k})$ in
\[
\{x\in X:x_{j}(t)=Rx_{j}(t+k\pi)=x_{j}(-t)\}\times\mathbb{R}^{+}\text{.}%
\]

\end{theorem}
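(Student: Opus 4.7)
The plan is to recast Theorem \ref{The2} as a global bifurcation problem in a fixed-point subspace and then apply the global Rabinowitz alternative to the compact perturbation $Kf(x,\nu)=x-T(\nu)x+g(x)$. I would first recognize the subspace in the statement as $X^{H_{k}}$, where $H_{k}<\Gamma=\mathbb{Z}_{2}\times O(2)\times O(2)$ is the subgroup generated by the time-reversal $\bar{\kappa}$ (which enforces $x_{j}(t)=x_{j}(-t)$) and by the pair $(\tilde{\kappa},k\pi)\in O(2)_{\text{space}}\times O(2)_{\text{time}}$ (which enforces $x_{j}(t)=Rx_{j}(t+k\pi)$). Because $V$ is $\Gamma$-invariant and the collinear equilibrium $\mathbf{a}$ is $H_{k}$-fixed, the operator $Kf$ is $\Gamma$-equivariant and restricts to a compact perturbation of the identity on $X^{H_{k}}\times\mathbb{R}^{+}$.

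Second, I would analyse $\ker(I-T(\nu_{k}))\cap X^{H_{k}}$ via Fourier series. Writing $x_{j}(t)=\sum_{l}x_{jl}e^{ilt}$, the brake condition selects real cosine modes, while the twist forces $x_{jl}=(-1)^{lk}Rx_{jl}$, placing $x_{jl}$ in the block $M_{0}$ when $lk$ is even and in $M_{1}$ when $lk$ is odd. The eigenvalue equation at level $l$ reads $l^{2}\nu_{k}^{2}x_{jl}=D^{2}V(\mathbf{a})x_{jl}$. Non-resonance of $\nu_{k}^{2}$ excludes all $l\geq 2$; for $l=0$, the translational zero modes are killed by the mean-zero constraint defining $X$, and the rotational direction $-J\mathbf{a}$ lies in the $(-1)$-eigenspace of $R$ (since $\mathbf{a}$ is collinear along the $x$-axis), so it fails the constant-mode requirement $x_{j0}\in\mathrm{Fix}(R)$ imposed by the twist. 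Only $l=1$ contributes, and simplicity of $\nu_{k}^{2}$ within $M_{k}$ (whose $R$-parity matches the one imposed by $l=1$) yields a one-dimensional kernel spanned by $\cos(t)v$, where $v$ is the corresponding eigenvector.

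Simplicity of the crossing then gives a $\pm 1$ jump of the Leray--Schauder index of $I-T(\nu)$ on $X^{H_{k}}$ at $\nu_{k}$, which is precisely the hypothesis of the global Rabinowitz alternative for compact perturbations of the identity. Applied to $Kf$ on $X^{H_{k}}\times\mathbb{R}^{+}$, it produces a connected continuum of nontrivial solutions bifurcating from $(0,\nu_{k})$ whose closure is either unbounded (norm or period blowing up), returns to another bifurcation point, or reaches the boundary of $\Omega$ (a collision). The main obstacle is the kernel analysis of the second step: one must carefully track which block $M_{k}$ houses the eigenvector of $\nu_{k}^{2}$, match its $R$-parity to that imposed by the twist $(\tilde{\kappa},k\pi)$, and check that the residual rotational zero eigenvalue of $D^{2}V(\mathbf{a})$ is filtered out by the $H_{k}$-constraint. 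All three facts reduce to the observation that $\mathbf{a}$ lies on the $x$-axis while its rotational tangent lies on the $y$-axis.
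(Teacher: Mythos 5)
Your proposal is correct and follows essentially the same route as the paper: restrict $Kf$ to the fixed-point space of $\mathbb{Z}_{2}(\tilde{\kappa},k\pi)\times\mathbb{Z}_{2}(\bar{\kappa})$, use Fourier modes to show that non-resonance kills $l\geq2$, the mean-zero constraint and the $R$-parity imposed by the twist kill the $l=0$ translational and rotational zero modes, and the simple eigenvalue at $l=1$ gives a one-dimensional kernel to which the global Rabinowitz alternative applies. Your explicit $(-1)^{lk}$ parity bookkeeping is just a more computational phrasing of the paper's reduction to one-dimensional irreducible representations, so no substantive difference.
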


\begin{proof}
The Fourier transform is $x=\sum_{l}x_{l}e^{ilt}$ with $x_{l}=\bar{x}_{-l}%
\in\mathbb{C}^{2n}$. Set $x_{l}=(x_{1,l},...,x_{n,l})$, where $x_{j,l}%
\in\mathbb{C}^{2}$ for $j=1,...,n$. The action of $\Gamma_{\mathbf{a}}$ in the
components $x_{j,l}\in\mathbb{C}^{2}$ is
\begin{align*}
\rho(\kappa,\pi)x_{j,l}  &  =-x_{n+1-j,l},\qquad\rho(\tilde{\kappa}%
)x_{j,l}=Rx_{j,l}\text{,}\\
\rho(\varphi)x_{j,l}  &  =e^{i\varphi l}x_{j,l},\qquad\rho(\bar{\kappa
})x_{j,l}=\bar{x}_{j,l}\text{.}%
\end{align*}
The irreducible representations under the action of $\Gamma_{\mathbf{a}}$ have
dimension one. Let $x_{j,l}=(x_{j,0,l},x_{j,1,l})\in\mathbb{C}^{2}$ with
$x_{j,k,l}\in\mathbb{C}$. The irreducible representations are given by the
subspaces $x_{j,k,l}=z$ and $x_{n+1-j,k,l}=\pm z$ for $j\in\lbrack
1,n/2)\cap\mathbb{N}$, and by $x_{j,k,l}=z$ for $j=n,n/2$. The action of the
group in coordinate $z\in\mathbb{C}$ is
\[
\rho(\tilde{\kappa})z=(-1)^{k}z,\qquad\rho(\bar{\kappa})z=\bar{z},\qquad
\rho(\pi)z=-z\text{.}%
\]

The fixed point space of $\bar{\kappa}$ consists of real $z$'s. Moreover,
every point is fixed by the action of $(\tilde{\kappa},k\pi)$.\ Therefore, the
subspace of real $z$'s is the fixed point space of $\mathbb{Z}_{2}%
(\tilde{\kappa},k\pi)\times\mathbb{Z}_{2}(\bar{\kappa})$. We define $X_{k}$ as
the intersection of $X$ and the fixed point subspace of $\mathbb{Z}_{2}%
(\tilde{\kappa},k\pi)\times\mathbb{Z}_{2}(\bar{\kappa})$ in $L_{2\pi}^{2}$,
\[
X_{k}=X\cap\mathrm{Fix}\left(  \mathbb{Z}_{2}(\tilde{\kappa},k\pi
)\times\mathbb{Z}_{2}(\bar{\kappa})\right)  \text{.}%
\]
If we prove that a positive non-resonant eigenvalue of $D^{2}V$ corresponds to
a simple zero eigenvalue of $I-T(\nu_{k})$ in the fixed point space, then the
global bifurcation follows from the global Rabinowitz alternative \cite{Ra}
applied to $Kf:X_{k}\times\mathbb{R}^{+}\rightarrow X_{k}$. A simplified proof
due to Ize is given in Theorem 3.4.1 of \cite{Ni2001}, see also the complete
exposition in \cite{IzVi03}.

The eigenvalues of $I-T(\nu)$ crossing zero are the eigenvalues of $T(\nu)$
crossing $1$. Moreover, the eigenvalues of $T(\nu)$ are $\nu^{-2}%
D^{2}V(\mathbf{a})+I$ and $\left(  \nu l\right)  ^{-2}D^{2}V(\mathbf{a})$. For
$l\neq0,1$, due to the non-resonant hypothesis of $\nu_{k}^{2}$, the matrices
$(l\nu_{k})^{-2}D^{2}V(\mathbf{a})$ have no eigenvalues equal to $1$ in
$X_{k}$. For $l=0$, the matrix $\nu_{k}^{-2}D^{2}V(\mathbf{a})+I$ has no
eigenvalues equal to $1$ because $\nu_{k}^{-2}D^{2}V(\mathbf{a})$ has no zero
eigenvalues in the fixed space of $\mathrm{Fix}\left(  \mathbb{Z}_{2}%
(\tilde{\kappa},k\pi)\right)  $. Finally, for $l=1$, the complex matrix
$\nu_{k}^{-2}D^{2}V(\mathbf{a})$ has one eigenvalue equal to $1$ corresponding
to the irreducible representation $x_{j,k,l}\in\mathbb{C}$. Therefore, in
$\mathrm{Fix}(\mathbb{Z}_{2}(\bar{\kappa}))$ (i.e. the subspace $X_{k}$), the
linear operator $I-T(\nu)$ has a simple eigenvalue crossing zero corresponding
to $l=1$.
\end{proof}

For the case $k=0$, the condition $x_{j}(t)=Rx_{j}(t+k\pi)$ implies that the
bifurcation consist of collinear periodic solutions, while if $k=1$, the
condition implies vertical orbits are degenerated figure eights.

\begin{remark}
Additionally, the action of $(\kappa,\pi)$ in the irreducible representation
$z$ is $\rho(\kappa,\pi)z=\pm z$, and every point is fixed under the action of
$(\kappa,\pi)$ or $(\kappa,\pi,\pi)$. Then, the solutions of the previous
theorem have the additional symmetry $x_{j}(t)=-x_{n+1-j}(t)$ or
$x_{j}(t)=-x_{n+1-j}(t+\pi)$.
\end{remark}

\subsection{The circular chain}

The map $Kf$ is equivariant under the action of
\[
\Gamma=D_{n}\times O(2)\times O(2).
\]
The isotropy group of the circular chain is%
\[
\Gamma_{\mathbf{a}}=\tilde{D}_{n}\times O(2).
\]
We have proved that $D^{2}V(\mathbf{a)}$ is equivalent to the matrix
$\mathrm{diag}(M_{1},...,M_{n})$, where $M_{k}=M_{n-k}\in M_{\mathbb{C}}(2)$.
Both matrices $M_{1}$ and $M_{n-1}$ have a zero-eigenvalue corresponding to
translations and $M_{n}$ has a zero-eigenvalue corresponding to rotations.

\begin{theorem}
\label{The3}Assuming that $D^{2}V(\mathbf{a})$ has only three
zero-eigenvalues, two corresponding to translations and one to rotations. If
\[
\nu_{k}=\left(  \alpha_{k}\pm\sqrt{\beta_{k}^{2}+\gamma_{k}^{2}}\right)
^{1/2}%
\]
is a non-resonant positive eigenvalue of $D^{2}V(\mathbf{a})$, double for
$k\in\lbrack1,n/2)\cap\mathbb{N}$ and simple for $k\in\{n/2,n\}\cap\mathbb{N}%
$, the equations have a global bifurcation emanating from $(0,\nu_{k})$ in%
\[
\{x\in X:x_{j}(t)=Rx_{n-j}(t)=x_{j}(-t)\}\times\mathbb{R}^{+}\text{.}%
\]

\end{theorem}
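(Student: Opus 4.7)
My plan is to mirror the proof of Theorem \ref{The2}, adapting it to the larger symmetry group $\Gamma = D_n \times O(2) \times O(2)$ of the circular chain. I would set
\[
X_k = X \cap \mathrm{Fix}\bigl(\mathbb{Z}_2(\kappa,\tilde{\kappa}) \times \mathbb{Z}_2(\bar{\kappa})\bigr),
\]
which by direct inspection of (\ref{Ac}) and (\ref{AcT}) is exactly the subspace $\{x \in X : x_j(t) = Rx_{n-j}(t) = x_j(-t)\}$ named in the theorem. Since $Kf$ is $\Gamma$-equivariant and this subgroup is contained in $\Gamma_{\mathbf{a}}$, one obtains a well-defined restricted map $Kf : X_k \times \mathbb{R}^+ \to X_k$ with $Kf(0,\nu) = 0$, and the global Rabinowitz alternative (Theorem 3.4.1 of \cite{Ni2001}) yields the desired global continuum provided $I - T(\nu_k)$ has a simple zero eigenvalue on $X_k$ at $\nu = \nu_k$.

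The Fourier-mode analysis then proceeds as in Theorem \ref{The2}. The non-resonance hypothesis rules out zero eigenvalues of $I - T(\nu_k)$ for modes $|l| \geq 2$. For $l = 0$, the only candidates come from the three assumed zero eigenvalues of $D^2V(\mathbf{a})$: the two translation directions are removed by the definition of $X$, and a short computation using the intertwining $R e^{J\theta} = e^{-J\theta} R$ shows that the infinitesimal rotation $(Ja_1,\dots,Ja_n)$ is anti-fixed under $(\kappa,\tilde{\kappa})$ and hence lies outside $X_k$. The delicate step is the $l = \pm 1$ mode, where I must count the real dimension of the $\nu_k^2$-eigenspace of $D^2V(\mathbf{a})$ inside $\mathrm{Fix}(\mathbb{Z}_2(\kappa,\tilde{\kappa}) \times \mathbb{Z}_2(\bar{\kappa}))$.

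For this count I would work with the eigenvectors $V_k^\pm = T_k(z_k^\pm)$ supplied by the block decomposition of Section 2. A direct computation with the transform (\ref{T}), using $R e^{J\theta} R = e^{-J\theta}$, yields $\rho(\kappa,\tilde{\kappa}) T_k(z) = T_{n-k}(Rz)$, and combined with the relations (\ref{eig}) this gives $\rho(\kappa,\tilde{\kappa}) V_k^\pm = -V_{n-k}^\pm = -\overline{V_k^\pm}$. For $k \in [1,n/2) \cap \mathbb{N}$, the real eigenspace of $\nu_k^2$ is spanned by $\mathrm{Re}\, V_k^\pm$ and $\mathrm{Im}\, V_k^\pm$; the above relation shows $\mathrm{Re}\, V_k^\pm$ is anti-fixed and $\mathrm{Im}\, V_k^\pm$ is fixed, so the double eigenvalue becomes simple on $X_k$. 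For $k \in \{n/2, n\}$ the block $M_k$ is real, i.e.\ $\gamma_k = 0$, so $M_k = \alpha_k I + \beta_k R$ diagonalizes in the $R$-eigenbasis and the eigenvector corresponding to the chosen sign in $\pm\sqrt{\beta_k^2 + \gamma_k^2}$ satisfies $Rz = \pm z$, producing a $(\kappa,\tilde{\kappa})$-fixed real eigenvector and again a simple zero eigenvalue on $X_k$.

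The principal obstacle is this last symmetry bookkeeping: keeping straight the interplay between $T_k$, complex conjugation, and the $R$-intertwining in order to reduce the real dimension of the $\nu_k^2$-eigenspace from two to one. Once simplicity on $X_k$ is established, the derivative $\tfrac{d}{d\nu}(\nu^{-2}\nu_k^2) = -2/\nu_k \neq 0$ gives the transverse crossing hypothesis, and Rabinowitz's continuation theorem delivers the global branch of $2\pi/\nu$-periodic solutions in $X_k \times \mathbb{R}^+$.
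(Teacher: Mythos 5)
Your proposal follows essentially the same route as the paper: restrict $Kf$ to $X\cap\mathrm{Fix}\bigl(\mathbb{Z}_2(\kappa,\tilde{\kappa})\times\mathbb{Z}_2(\bar{\kappa})\bigr)$, show the crossing eigenvalue becomes simple there, and invoke the global Rabinowitz alternative --- the paper performs the simplicity count in the coordinates $(z_1,z_2)$ of the irreducible representations, finding the fixed set $(z_1,\bar{z}_1)$ with $z_1$ imaginary, which is equivalent to your $\mathrm{Re}/\mathrm{Im}$ splitting of $V_k^{\pm}$ via the relation $\rho(\kappa,\tilde{\kappa})V_k^{\pm}=-\overline{V_k^{\pm}}$. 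The only caveat, which the paper itself glosses over with ``the case $k\in\{n/2,n\}$ follows similarly,'' is that for those $k$ the eigenvector with $Rz=-z$ is anti-fixed rather than fixed, so only the sign whose eigenvector satisfies $Rz=z$ actually lands in the fixed-point space.
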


\begin{proof}
Let $x=\sum_{l}x_{l}e^{ilt}$ with $x_{l}=\bar{x}_{-l}\in\mathbb{C}^{2n}$.
Using the transformation (\ref{T}), we have
\[
x(t)=\sum_{(j,l)\in\mathbb{Z}_{n}\times\mathbb{Z}}T_{j}(x_{j,l})e^{ilt}%
\text{.}%
\]
The condition $x(t)=\bar{x}(t)$ implies $x_{j,l}=\bar{x}_{-j,-l}$. Then, the
action of $\Gamma_{\mathbf{a}}$ in the components $x_{j,l}$ is
\begin{align*}
\rho(\zeta,\zeta)x_{j,l}  &  =e^{ij\zeta}x_{j,l}\text{,\qquad}\rho
(\kappa,\tilde{\kappa})x_{j,l}=Rx_{-j,l}\text{,}\\
\rho(\varphi)x_{j,l}  &  =e^{il\varphi}x_{j,l}\text{,\qquad}\rho(\bar{\kappa
})x_{j,l}=\bar{x}_{-j,l}\text{.}%
\end{align*}
Since $x_{j,l}\in\mathbb{C}^{2}$, the irreducible representations of
$\Gamma_{\mathbf{a}}$ are
\[
(x_{j,l},x_{-j,l})=(z_{1}v_{j}^{\pm},z_{2}v_{n-j}^{\pm})\text{,}%
\]
for $j\in\lbrack1,n/2)\cap\mathbb{N}$, where $(z_{1},z_{2})\in\mathbb{C}^{2}$
and $v_{j}^{\pm}$ are the eigenvalues with the properties (\ref{eig}). The
action in $(z_{1},z_{2})$ is
\[
\rho(\kappa,\tilde{\kappa})(z_{1},z_{2})=-(z_{2},z_{1})\text{,\quad}\rho
(\bar{\kappa})(z_{1},z_{2})=(\bar{z}_{2},\bar{z}_{1})\text{.}%
\]

As in the previous theorem, we look for isotropy groups with fixed point space
of real dimension equal to one in the representation $(z_{1},z_{2}%
)\in\mathbb{C}^{2}$. A point is fixed by $\bar{\kappa}$ if $z_{2}=\bar{z}_{1}$
and by $\left(  \kappa,\tilde{\kappa}\right)  $ if $z_{1}=-\bar{z}_{1}.$ Then,
the fixed point space of $\mathbb{Z}_{2}(\kappa,\tilde{\kappa})\times
\mathbb{Z}_{2}(\bar{\kappa})$ in each irreducible representation is
$(z_{1},\bar{z}_{1})\in\mathbb{C}^{2}$ with $z_{1}$ imaginary, and has real
dimension equal to one. As in the previous theorem, the global bifurcation
follows from applying the global Rabinowitz alternative \cite{Ra} to the
operator $Kf$ restricted to $X\cap\mathrm{Fix}(\mathbb{Z}_{2}(\kappa
,\tilde{\kappa})\times\mathbb{Z}_{2}(\bar{\kappa}))$. In a similar way, the
case $k\in\{n/2,n\}\cap\mathbb{N}$ follows.
\end{proof}

\begin{remark}
Another bifurcation of periodic solutions from $(0,\nu_{k})$ may be proven
using the abelian group%
\[
\Gamma=\mathbb{Z}_{n}\times S^{1}\times S^{1}%
\]
and $\Gamma$-equivariant degree theory \cite{IzVi03}. In this way, one may
obtain an additional bifurcation of periodic solution in the fixed point space
of $\mathbb{Z}_{n}(\zeta,\zeta,-k\zeta)$. These solutions satisfy%
\[
x_{j}(t)=e^{-J\zeta}x_{j+1}(t-k\zeta)\text{.}%
\]
See \cite{GaIz13a} and \cite{GaIz13b} for details.
\end{remark}

\section{Applications}

Using CHARMM36 force field \cite{BeZh12}, we can explore the implications of
our results on actual configurations. To this end, we write Newton's equations
as
\[
m\ddot{w}_{j}=-\nabla_{w_{j}}\tilde{V}(w)\text{,}%
\]
where $\tilde{V}(w)$ is given in (\ref{V}), $\tilde{U}(x)=k(\sqrt{x}-b)^{2}$
and $\tilde{W}(x)$ is
\[
\tilde{W}(x)=4\varepsilon\left(  \frac{\sigma^{12}}{x^{6}}-\frac{\sigma^{6}%
}{x^{3}}\right)  +q\frac{1}{\sqrt{x}}.
\]

To apply our theorems, we renormalize the equations by taking $w_{j}%
(t)=bu_{j}(\omega t)$, where $\omega=\sqrt{k/m}$. Since $\nabla_{w_{j}}%
=b^{-1}\nabla_{u_{j}}$, then
\[
\ddot{u}_{j}=-\frac{1}{kb^{2}}\nabla_{u_{j}}\tilde{V}(bu)\text{.}%
\]
Therefore, $\ddot{u}_{j}=-\nabla_{u_{j}}V(u)$, where $V(u)=\frac{1}{kb^{2}%
}\tilde{V}(bu)$, and we have taken the rescaled potentials $U(x)=(\sqrt
{x}-1)^{2}$ and%
\[
W(x)=\frac{4\varepsilon}{kb^{2}}\left(  \frac{\sigma^{12}}{b^{12}x^{6}}%
-\frac{\sigma^{6}}{b^{6}x^{3}}\right)  +\frac{q}{kb^{2}}\frac{1}{b\sqrt{x}%
}=\frac{B}{x^{6}}-\frac{A}{x^{3}}+\frac{C}{\sqrt{x}},
\]
and
\[
A=\frac{4\varepsilon}{kb^{8}}\sigma^{6},\quad B=\frac{4\varepsilon}{kb^{14}%
}\sigma^{12},\quad C=\frac{q}{kb^{3}}\text{.}%
\]

For instance, carbon atoms have constants $\varepsilon\sim0.3$~$\mathrm{kJ}%
~\mathrm{mol}^{-1}$, $\sigma\sim0.35$~$\mathrm{nm}$, $b\sim0.13$~$\mathrm{nm}%
$, $k\sim255,224$~$\mathrm{kJ~}\mathrm{nm}^{-1}~\mathrm{mol}^{-2}$ and $q=0$.
Therefore, $A\sim0.1$, $B\sim40$\text{ and }$C=0$\text{. }In Table 1, we
present the $9$ non-zero eigenvalues of the Hessian for $6$ carbon atoms, and
the number of unstable eigenvalues for $n$ carbon atoms.

\begin{table}[th]%
\begin{tabular}
[c]{|c|c|c|}\hline
$n=6$ & Collinar chain & Ring\\\hline
$\lambda_{1}$ & 43.6516 & 35.0707\\
$\lambda_{2}$ & 35.089 & 29.2273\\
$\lambda_{3}$ & 23.3929 & 29.2273\\
$\lambda_{4}$ & 11.6967 & 17.5408\\
$\lambda_{5}$ & 3.13419 & 17.5408\\
$\lambda_{6}$ & -0.000225756 & 11.7003\\
$\lambda_{7}$ & -0.000129767 & 0.0109106\\
$\lambda_{8}$ & -0.0000528487 & 0.00469506\\
$\lambda_{9}$ & -0.0000104432 & 0.00469506\\\hline
\end{tabular}
\qquad\
\begin{tabular}
[c]{|c|c|c|}\hline
$n$ & Collinear chain & Ring\\\hline
3 & 1 & 0\\
4 & 2 & 0\\
5 & 3 & 0\\
6 & 4 & 0\\
7 & 5 & 0\\
8 & 6 & 0\\
9 & 7 & 2\\
10 & 8 & 2\\
11 & 9 & 4\\\hline
\end{tabular}
\caption{Left:\ Eigenvalues of the Hessian for $6$ carbon atoms.
Right:\ Number of negative eigenvalues for $n$ carbon atoms.}%
\end{table}

Other particles considered in CHARMM36 have similar parameter values, where
$k$ has a range between $1\times10^{5}$ and $5\times10^{5}$~$\mathrm{kJ~}%
\mathrm{nm}^{-1}~\mathrm{mol}^{-2}$. With these considerations, the parameters
set for real configurations is approximately given by%
\begin{equation}
(A,B)\in\lbrack0,1]\times\lbrack0,100]\text{.} \label{ran}%
\end{equation}
For convenience, we take $C=0$, as most backbone atoms are neutral in charge
and the qualitative behavior does not change considering $C=0$. We present
numerical computations for $n=6$ and this set of parameters.

\subsection{Collinear chain}

\begin{figure}[h]
\begin{center}
\resizebox{13cm}{!} {\includegraphics{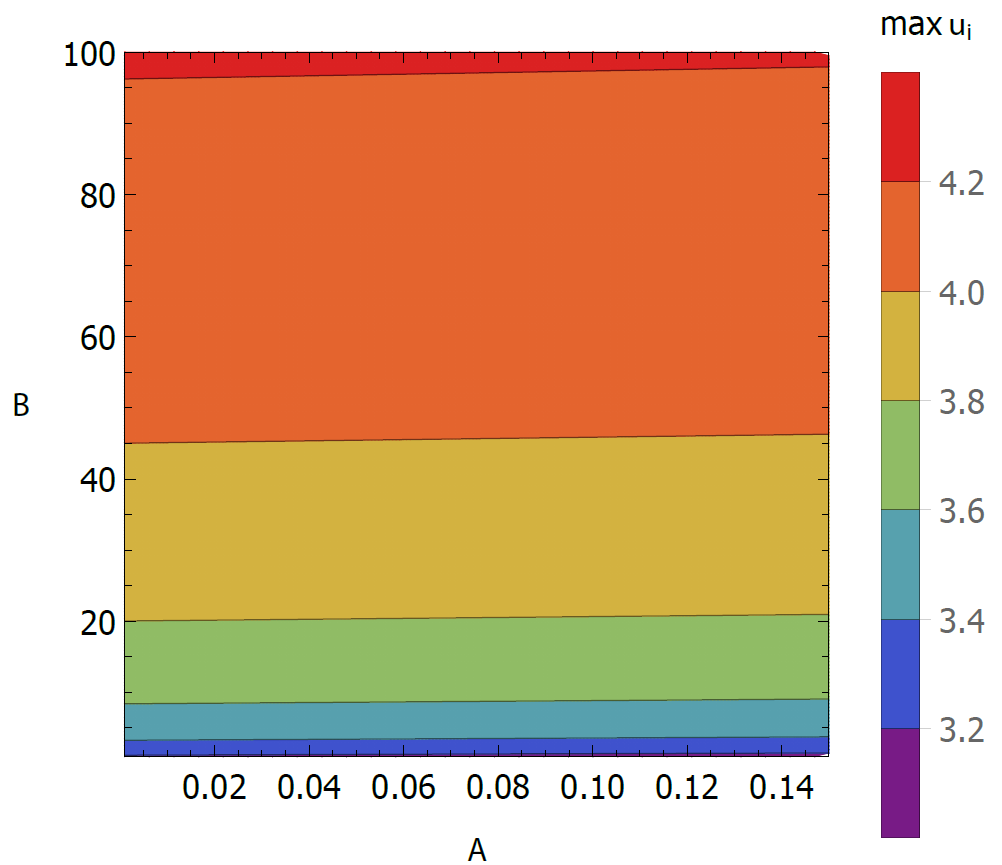}  \hskip1.0cm
\includegraphics{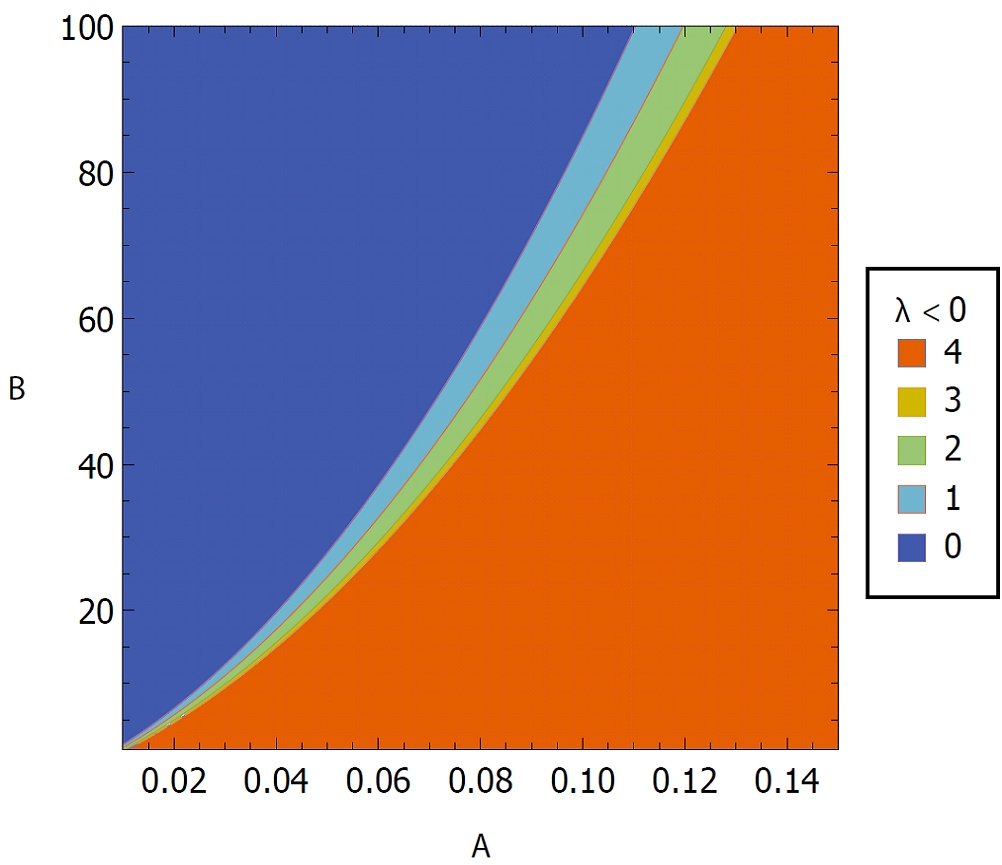} }
\end{center}
\caption{Left (2a): Half length of the collinear equilibrium for $n=6$. Right
(2b): The number of negative eigenvalues of the Hessian.}%
\label{fig:1}%
\end{figure}

Using Newton's method, we have numerically computed the global collinear
symmetric minimizer. The half length of the collinear equilibrium, which is
equal to $2.5$ when $A=B=0$, is presented in Figure 2a for $n=6$ and different parameters.

The matrix $M_{0}$ has $1$ zero-eigenvalue. We have numerically computed that
$M_{0}$ has $5$ positive eigenvalues for any $A\in\lbrack0,1]$ and
$B\in\lbrack0,100]$. Therefore, the collinear equilibrium has $5$ bifurcating
branches of collinear even periodic solutions.

Two of the six eigenvalues of $M_{1}$ are $0$. In Figure 2b we present the
number of negative eigenvalues of $M_{1}$. The collinear equilibrium is stable
in the region with zero negative eigenvalues shown in Figure 2b. In this
region, there are $4$ branches of non-collinear even periodic solutions, with
orbits resembling figure eights. The number of these solutions changes to $3$
in the region with $1$ negative eigenvalue in Figure 2b, to $2$ in the region
with $2$ negative eigenvalues, and so on.

\subsection{Circular chain}

\begin{figure}[h]
\begin{center}
\resizebox{13cm}{!} {\includegraphics{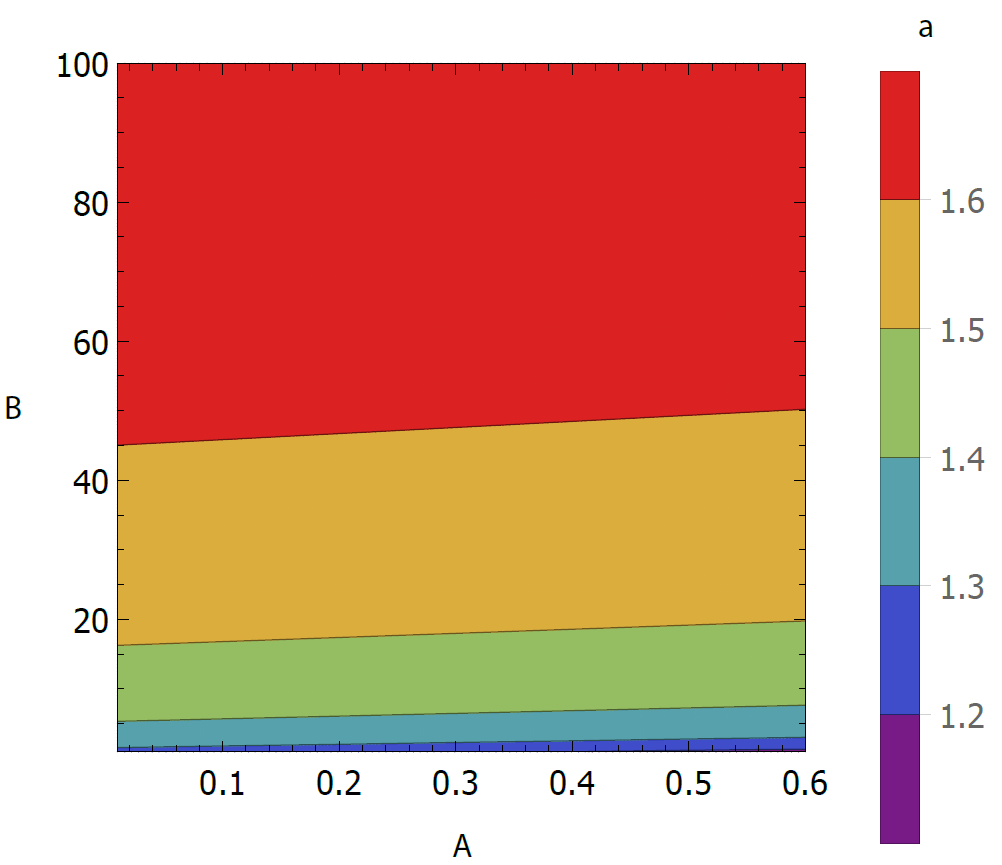}  \hskip1.0cm
\includegraphics{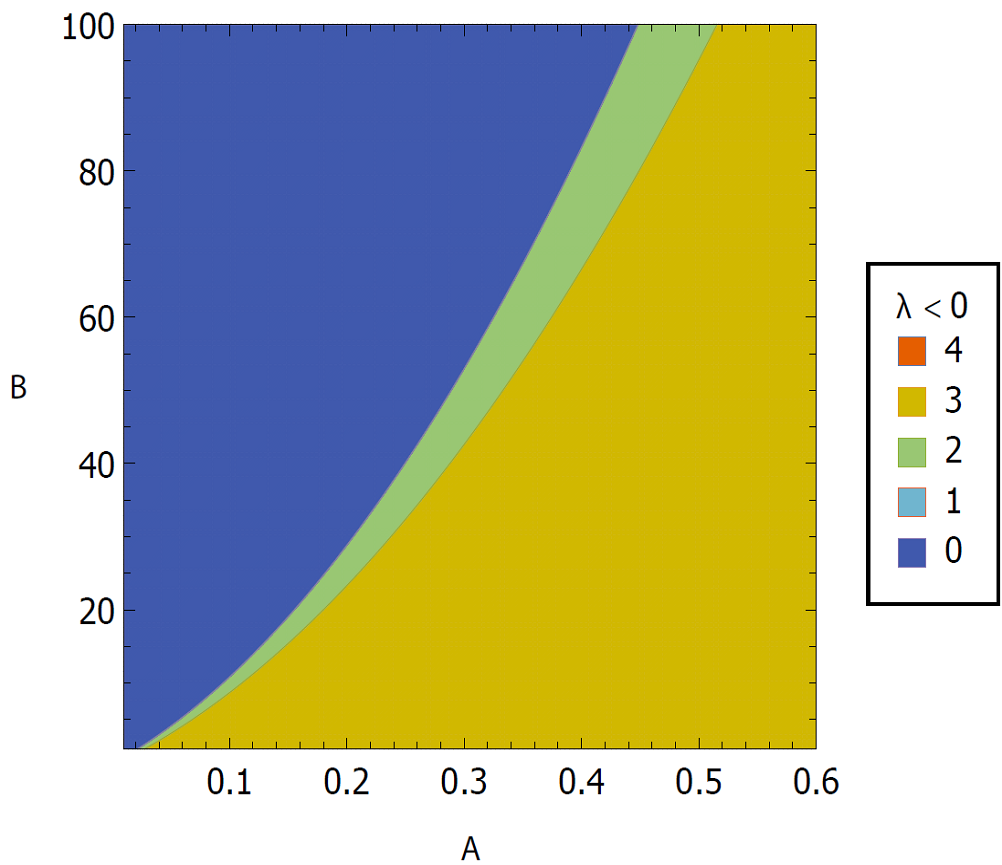} }
\end{center}
\caption{Left (3a): Amplitude of the circular equilibrium for $n=6$. Right
(3b): Number of negative eigenvalues of the Hessian counted with multiplicity
. }%
\end{figure}

For $n=6$, we present the amplitude of the global minimizer among all ring
configurations in Figure 3a, which is equal to $1$ when $A=B=0$. In this case,
the matrix $D^{2}V(a)$ has a total of $12$ eigenvalue: $3$ are zero, $3$ are
simple and $3$ are double. The circular equilibrium is stable in the region
with zero negative eigenvalues shown in Figure 3a. In this region there are
$3+3$ branches of periodic brake orbits. In addition, there are branches of
periodic solutions of the form $x_{j}(t)=e^{jJ\zeta}x_{n}(t+jk\zeta)$.

One double eigenvalue is negative in the region with two negative eigenvalues
in Figure 3b and other simple eigenvalue is negative in the region with three
negative eigenvalues. This means that the circular equilibrium is unstable in
these regions.

\section{Conclusion}

This paper is our first attempt to mathematically study stability and
vibrations of configurations of atoms. We have found collinear and circular
equilibria, and from them, we have proven the existence of periodic brake orbits.

Carbon atoms arrange in large chains known as alkanes and cycloalkanes. Our
results confirm that cycloalkanes (ring like arrangements) are stable for
$n=3$ to $8$. On the other side, we have shown that collinear arrangements are
unstable from $n=3$ to $11$; these results might explain why alkanes form
snake like structures instead.

With some work, our treatment can be extend to more complex structures such as
fullerenes. In the future, we will present a full study of other symmetric
arrangements, like the $C_{60}$ fullerene.

\section*{Acknowledgements}

C. Garc\'{\i}a is grateful to E. Perez-Chavela and S. Rybicky for useful
discussions about this problem.

\end{document}